\theoremstyle{definition}
\theoremstyle{proposition}
\theoremstyle{lemma}
\newtheorem{lemma}{Lemma}
\theoremstyle{corollary}
\newtheorem{corollary}{Corollary}
\theoremstyle{example}
\newtheorem{example}{Example}
\theoremstyle{theorem}
\theoremstyle{remark}
\newtheorem{remark}{Remark}
\begin{document}

\title{Approximation of Gram-Schmidt Orthogonalization \\by Data Matrix}
    
\author{Gen~Li~and~Yuantao~Gu % <-this % stops a space
\thanks{The authors are with the Department of Electronic Engineering, Tsinghua University, Beijing 100084, China. The corresponding author of this work is Yuantao Gu (e-mail: gyt@tsinghua.edu.cn).}}

\date{Submitted December 31, 2016}
\maketitle

\begin{abstract}
	For a matrix ${\bf A}$ with linearly independent columns,
	this work studies to use its normalization $\bar{\bf A}$ and ${\bf A}$ itself to approximate
	its orthonormalization $\bf V$.
	We theoretically analyze the order of the approximation errors
	as $\bf A$ and $\bar{\bf A}$ approach ${\bf V}$, respectively.
	Our conclusion is able to explain the fact that a high dimensional Gaussian matrix can 
	well approximate the corresponding truncated Haar matrix.
	For applications, this work can serve as a foundation of a wide variety of problems in signal processing
	such as compressed subspace clustering.
	
\textbf{Keywords:} subspaces, basis, Gram-Schmidt process, projection, Gaussian matrix
	
%本文研究用归一化基矩阵$\bar{\bf A}$或者任意基矩阵${\bf A}$近似归一化正交基矩阵$\bf V$的问题。
%我们定量刻画了随着$\bar{\bf A}$（或者$\bf A$）趋于（单位）正交化的过程中，其和归一化正交基矩阵$\bf V$的误差的阶数。
%应用本文结论，可以解释高维随机矩阵近似其Haar矩阵的误差很小。
%本文结论是子空间降维等信号处理问题的理论基础。
\end{abstract}

\section{Introduction}

Suppose that $\mathcal S$ is a $d$-dimensional subspace in $\mathbb{R}^n$.
The columns of ${\bf A}=\left[{\bf a}_1,{\bf a}_2,\dots,{\bf a}_d\right]\in\mathbb{R}^{n\times d}$
constitute a basis of $\mathcal S$.
We can normalize the columns of ${\bf A}$ and obtain a normal basis of $\mathcal S$ as the following
\begin{equation}\label{eq_L1_0}
	\bar{\bf A} = \left[\bar{\bf a}_1,\bar{\bf a}_2,\dots,\bar{\bf a}_d\right] = \left[\frac{{\bf a}_1}{\|{\bf a}_1\|},\frac{{\bf a}_2}{\|{\bf a}_2\|},\dots,\frac{{\bf a}_d}{\|{\bf a}_d\|}\right].
\end{equation}
Furthermore, we can apply Gram–Schmidt process \cite{bjorck1994numerics, hoffmann1989iterative} on $\bar{\bf A}$, or directly on $\bf A$, to obtain an orthonormal basis of $\mathcal S$ as the following
%进一步，我们还可以对$\bar{\bf A}$（或直接对$\bf A$）的各列进行Gram-Schmidt处理，得到$\mathcal S$的归一化正交基，
\begin{equation}\label{eq_L1_1}
{\bf v}_i = \frac{\tilde{\bf v}_i}{\|\tilde{\bf v}_i\|}, \quad i = 1,2,\dots,d,
\end{equation}
where
\begin{equation}\label{eq_L1_2}
\tilde{\bf v}_i =\bar{\bf a}_i-\sum_{m=1}^{i-1} \left(\bar{\bf a}_i^{\rm T}{\bf v}_m\right){\bf v}_m, \quad i = 1,2,\dots,d.
\end{equation}
Notice that the index $i$ in \eqref{eq_L1_2} should start from $1$
and increase to $d$,
and if ${\bf a}_i$ is used instead of $\bar{\bf a}_i$ in \eqref{eq_L1_2},
then the result remains the same.
We denote the matrix $\left[{\bf v}_1,{\bf v}_2,\dots,{\bf v}_d\right]$ as $\bf V$.
A natural question is how to measure the similarity between $\bf A$ (or $\bar{\bf A}$) and $\bf V$
as base matrices of the same subspace.

%注意\eqref{eq_L1_2}中的$i$必须从$1$开始逐渐增大到$d$进行处理。
%另外把\eqref{eq_L1_2}中的$\bar{\bf a}_i$换成${\bf a}_i$，正交化的结果不变。
%我们将$\left[{\bf v}_1,{\bf v}_2,\dots,{\bf v}_d\right]$构成的矩阵记做$\bf V$。

Consider the case where subspace $\mathcal S$ is described by certain data points on it.
In other words, what we have is a set of linearly independent points $\{{\bf a}_i\}_i$
on a latent subspace, rather than an orthonormal basis of it.
In order to calculate the energy of the projection of a new data point $\bf x$ on $\mathcal S$,
we need to first apply the Gram-Schmidt process on $\bf A$ to obtain $\bf V$,
then the energy is $\|{\bf V}^{\rm T}{\bf x}\|^2$.
In cases where the amount of data is huge, or the data are acquired and stored in a distributed way,
the cost of the Gram-Schmidt process is high.
An intuitive way of approximating ${\bf V}$ is to normalize $\bf A$ as shown in \eqref{eq_L1_0},
and then to use the obtained $\bar{\bf A}$ to calculate an approximated projection 
$\bar{\bf A}^{\rm T}{\bf x}$ and its energy $\|\bar{\bf A}^{\rm T}{\bf x}\|^2$.
In such a way, how accurate can the approximation be? 
How to evaluate such approximation?
Furthermore, if we directly use $\|{\bf A}^{\rm T}{\bf x}\|^2$ as an approximation of the energy of the projection,
then how large can the error be?
The answers must depend on some properties of $\bf A$ or $\bar{\bf A}$,
and this work will try to find out such answers.

%一个很自然的问题就是在作为正交基底的意义上，如何衡量$\bf A$或者$\bar{\bf A}$和$\bf V$的相似程度。
%例如，子空间$\mathcal S$是由若干数据描述的，换言之，我们对子空间$\mathcal S$的刻画来自于其上的$d$个线性无关的采样点$\{{\bf a}_i\}_i$。
%如果我们希望计算一个新样点$\bf x$在$\mathcal S$上投影的能量，最精确方法当然是先对$\bf A$做Gram-Schmidt处理得到归一化正交基$\bf V$，然后将$\bf x$投影到子空间$\mathcal S$上并计算能量$\|{\bf V}^{\rm T}{\bf x}\|^2$。
%在大数据和数据分布式存储的背景下，做Gram-Schmidt处理显然代价高昂。
%一种容易想到的替代方案是，参考\eqref{eq_L1_0}对$\bf A$的各列做归一化得到$\bar{\bf A}$，再将$\bf x$投影到$\bar{\bf A}$上并计算能量，即$\|\bar{\bf A}^{\rm T}{\bf x}\|^2$，这能否作为精确结果的一个近似呢？如何定量评价这个近似呢？
%再退一步，如果直接用$\|{\bf A}^{\rm T}{\bf x}\|^2$作为能量的近似，又会带来多大的误差呢？
%显然这两个近似程度一定和$\bf A$或$\bar{\bf A}$的性质有关。
%本文即将对这个问题展开讨论。

Such problems are fundamental in cases where random matrices are applied \cite{tulino2004random, petz2004asymptotics}.
According to the conclusions of this work,
if ${\bf A}$ is a random matrix,
we do not have to apply the Gram-Schmidt process to ${\bf A}$,
instead the normalized matrix $\bar{\bf A}$ can be a rather accurate approximation.
For a high dimensional random matrix, even normalization is not needed,
and the matrix itself is able to be a good approximation.

%这个问题很重要，特别是在随机矩阵的场景。
%根据本文结论，我们不必对随机矩阵描述的基底进行Gram-Schmidt处理，只需各列归一化即可很好得近似归一化正交基；对于维度很高的随机矩阵，甚至无需归一化，可直接用随机矩阵近似归一化正交基。
%本文结论和xxx一致。

\section{Approximation of orthonormal basis by normal basis}
\label{sec2}

%Here, we mainly provide a method for approximating an orthonormal matrix or the orthonormal basis for a subspace.

We first study to use the normalized matrix $\bar{\bf A}$ to approximate the
orthonormalized matrix $\bf V$.
The similarity between $\bar{\bf A}$ and an orthonormal matrix is measured by 
$\bar{\bf R} = \bar{\bf A}^{\rm T}\bar{\bf A} - {\bf I}$.
Based on a defined decomposition ${\bf V}-\bar{\bf A}=\bar{\bf A}\bar{\bf U}$,
we use $\bar{\bf U}$ to evaluate the similarity between $\bar{\bf A}$ and $\bf V$.
The following lemma describes the performance of $\bar{\bf U}$
as $\bar{\bf R}\rightarrow 0$.

%我们首先研究用归一化基底$\bar{\bf A}$近似归一化正交基$\bf V$的情况。
%我们用$\bar{\bf R} = \bar{\bf A}^{\rm T}\bar{\bf A} - {\bf I}$度量$\bar{\bf A}$和一个正交基底的相似程度。
%在定义${\bf V}-\bar{\bf A}=\bar{\bf A}\bar{\bf U}$的基础上，用$\bar{\bf U}$度量$\bar{\bf A}$和$\bf V$的相似程度。
%Lemma \ref{L1} 给出了在$\bar{\bf R}$趋于零的条件下$\bar{\bf U}$的变化情况。

\begin{lemma}\label{L1}
Let ${\bf V}=\left[{\bf v}_1, {\bf v}_2, \dots, {\bf v}_d\right]$ denote
the Gram-Schmidt orthogonalization
%orthonormal matrix 
of a column-normalized matrix $\bar{\bf A}=\left[\bar{\bf a}_1, \bar{\bf a}_2, \dots, \bar{\bf a}_d\right]$, 
where $\|\bar{\bf a}_i\|=1, \forall i$. %, gotten through the Gram-Schmidt orthogonalization. 
Denote $\bar{\bf R} = (\bar r_{ji}) = \bar{\bf A}^{\rm T}\bar{\bf A} - {\bf I}$. 
Then when $\bar{r}_{ji} = \bar{\bf a}_j^{\rm T}\bar{\bf a}_i$ is small enough for $j\ne i$, we can use $\bar{\bf A}$ to approximate ${\bf V}$ with error ${\bf V}-\bar{\bf A}=\bar{\bf A}\bar{\bf U}$, where $\bar{\bf U}=[\bar u_{ji}]\in\mathbb{R}^{d\times d}$ is an upper triangular matrix satisfying
\begin{equation}\label{eq_L1_0_1}
\bar u_{ii} = \bar g_{ii}(\bar{\bf R})\|\bar{\bf R}\|_F^2,\quad \forall i,
\end{equation}
where $\bar g_{ii}(\bar{\bf R}) > 0$ and $\lim_{\bar{\bf R} \to {\bf 0}} \bar g_{ii}(\bar{\bf R}) \le {1}/{4}$, and
\begin{equation}\label{eq_L1_0_2}
\bar u_{ji} = - \bar r_{ji} + \bar g_{ji}(\bar{\bf R})\|\bar{\bf R}\|_F, \quad \forall j<i,
\end{equation}
where $\lim_{\bar{\bf R} \to {\bf 0}} \bar g_{ji}(\bar {\bf R}) = 0$.
\end{lemma}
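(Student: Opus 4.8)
The plan is to reduce the statement to the thin QR factorization of $\bar{\bf A}$ and then expand. Since $\bar{\bf A}$ has linearly independent columns, formulas \eqref{eq_L1_1}--\eqref{eq_L1_2} say precisely that ${\bf V}$ is the $Q$-factor of $\bar{\bf A}$: with the upper triangular ${\bf T}=(t_{ji})$ defined by $t_{mi}=\bar{\bf a}_i^{\rm T}{\bf v}_m$ for $m<i$, $t_{ii}=\|\tilde{\bf v}_i\|>0$, and $t_{mi}=0$ for $m>i$, one has $\bar{\bf A}={\bf V}{\bf T}$. Hence ${\bf V}=\bar{\bf A}{\bf T}^{-1}$ and ${\bf V}-\bar{\bf A}=\bar{\bf A}({\bf T}^{-1}-{\bf I})$; since $\bar{\bf A}$ has full column rank, $\bar{\bf U}={\bf T}^{-1}-{\bf I}$ is the \emph{unique} matrix with ${\bf V}-\bar{\bf A}=\bar{\bf A}\bar{\bf U}$, and it is upper triangular because ${\bf T}$ is. Moreover $\bar{\bf A}^{\rm T}\bar{\bf A}={\bf T}^{\rm T}{\bf T}$, so ${\bf T}$ is the Cholesky factor of ${\bf I}+\bar{\bf R}$. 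The problem is thereby reduced to estimating the entries of ${\bf T}^{-1}-{\bf I}$ in terms of $\bar{\bf R}$.

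Next I would write ${\bf T}={\bf I}+{\bf S}$ with ${\bf S}=(s_{ji})$ upper triangular and expand ${\bf I}+\bar{\bf R}=({\bf I}+{\bf S}^{\rm T})({\bf I}+{\bf S})$ entrywise. The $(j,i)$ entry for $j<i$ gives the recursion $(1+s_{jj})\,s_{ji}=\bar r_{ji}-\sum_{m<j}s_{mj}s_{mi}$, while the $(i,i)$ entry, using $\bar r_{ii}=0$ (the columns are normalized), gives $(1+s_{ii})^2=1-\sum_{m<i}s_{mi}^2$, i.e.\ $t_{ii}=\sqrt{1-\sum_{m<i}s_{mi}^2}$ and $s_{ii}=\sqrt{1-\sum_{m<i}s_{mi}^2}-1\le 0$ (this is just the Pythagorean identity $\|\tilde{\bf v}_i\|^2=1-\sum_{m<i}(\bar{\bf a}_i^{\rm T}{\bf v}_m)^2$). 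An induction on $i$, and on $j$ within each $i$, shows this system has a unique solution with all $s_{ji}\to0$ as $\bar{\bf R}\to{\bf 0}$, and a bookkeeping of the two recursions yields the orders
\[
s_{ji}=\bar r_{ji}+O(\|\bar{\bf R}\|_F^2)\quad(j<i),\qquad s_{ii}=-\tfrac12\textstyle\sum_{m<i}s_{mi}^2+O(\|\bar{\bf R}\|_F^4)=O(\|\bar{\bf R}\|_F^2).
\]

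Finally I would invert. Because $\|{\bf S}\|$ is small, $\bar{\bf U}=({\bf I}+{\bf S})^{-1}-{\bf I}=-{\bf S}+{\bf S}^2-{\bf S}^3+\cdots$, whose terms past $-{\bf S}$ are $O(\|\bar{\bf R}\|_F^2)$ entrywise. For $j<i$ this gives $\bar u_{ji}=-s_{ji}+O(\|\bar{\bf R}\|_F^2)=-\bar r_{ji}+O(\|\bar{\bf R}\|_F^2)$, so $\bar g_{ji}(\bar{\bf R}):=(\bar u_{ji}+\bar r_{ji})/\|\bar{\bf R}\|_F=O(\|\bar{\bf R}\|_F)\to0$, which is \eqref{eq_L1_0_2}. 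For the diagonal, upper-triangularity of ${\bf S}$ makes the $(i,i)$ entry of $({\bf I}+{\bf S})^{-1}$ equal to $1/(1+s_{ii})=1/t_{ii}$, so $\bar u_{ii}=1/t_{ii}-1=1/\sqrt{1-\sum_{m<i}s_{mi}^2}-1\ge0$, with equality only when $\bar{\bf a}_i$ is already orthogonal to $\mathrm{span}\{\bar{\bf a}_1,\dots,\bar{\bf a}_{i-1}\}$; substituting $s_{mi}=\bar r_{mi}+O(\|\bar{\bf R}\|_F^2)$ gives $\bar u_{ii}=\tfrac12\sum_{m<i}\bar r_{mi}^2+O(\|\bar{\bf R}\|_F^3)$. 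Setting $\bar g_{ii}(\bar{\bf R}):=\bar u_{ii}/\|\bar{\bf R}\|_F^2$ yields \eqref{eq_L1_0_1}, and since the symmetry and zero diagonal of $\bar{\bf R}$ give $\|\bar{\bf R}\|_F^2=2\sum_{m<l}\bar r_{ml}^2\ge 2\sum_{m<i}\bar r_{mi}^2$, we get $\bar g_{ii}(\bar{\bf R})\le\tfrac14+O(\|\bar{\bf R}\|_F)$, hence $\lim_{\bar{\bf R}\to{\bf 0}}\bar g_{ii}(\bar{\bf R})\le1/4$. (The case $\|\bar{\bf R}\|_F=0$ — in particular $i=1$ — is trivial, since then $\tilde{\bf v}_i=\bar{\bf a}_i$ and the relevant column of $\bar{\bf U}$ vanishes.)

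The main obstacle is the middle step: carrying the induction on the nested recursion for ${\bf S}$ so that the stated orders of $s_{ji}$ and $s_{ii}$ — and therefore the limits $\bar g_{ji}(\bar{\bf R})\to0$ and $\limsup_{\bar{\bf R}\to{\bf 0}}\bar g_{ii}(\bar{\bf R})\le1/4$ — come out cleanly, while retaining the sign $s_{ii}\le0$ that guarantees $\bar u_{ii}\ge0$. Everything else is linear-algebra bookkeeping with a finite Neumann-type expansion and the Frobenius identity $\|\bar{\bf R}\|_F^2=2\sum_{j<i}\bar r_{ji}^2$.
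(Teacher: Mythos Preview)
Your proposal is correct and follows essentially the same route as the paper. Both arguments exploit the QR structure of Gram--Schmidt to obtain the identical key identity $\bar u_{ii}=1/\|\tilde{\bf v}_i\|-1=1/t_{ii}-1$, apply the same Taylor expansion of $1/\sqrt{1-x}$, and use the symmetry $\|\bar{\bf R}\|_F^2=2\sum_{j<i}\bar r_{ji}^2$ for the $1/4$ bound; your Cholesky recursion for ${\bf S}={\bf T}-{\bf I}$ followed by a Neumann inversion is just a tidier packaging of the paper's direct coefficient-matching recursion \eqref{eq_L1_6}--\eqref{eq_L1_7} for $\bar{\bf U}$ (indeed your $s_{mi}$ are exactly the quantities $\bar{\bf a}_i^{\rm T}{\bf v}_m$ the paper computes in \eqref{eq_L1_11}).
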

\begin{proof}
We define $\bf V$ following the Gram-Schmidt process of \eqref{eq_L1_1} and \eqref{eq_L1_2}.
We have ${\bf V}=\bar{\bf A}\bar{\bf G}$, where $\bar{\bf G}$ is an upper triangular matrix. Accordingly, $\bar{\bf U}=\bar{\bf G}-{\bf I}$ is also upper triangular and 
\begin{equation}\label{eq_L1_3}
{\bf v}_i = \bar{\bf a}_i+\sum_{j=1}^i \bar u_{ji}\bar{\bf a}_j.
\end{equation}
Using \eqref{eq_L1_3} and \eqref{eq_L1_2} in \eqref{eq_L1_1}, we have
\begin{equation}\label{eq_L1_4}
        {\bf v}_i = \frac{1}{\|\tilde{\bf v}_i\|} \left(\bar{\bf a}_i-\sum_{m=1}^{i-1} \bar{\bf a}_i^{\rm T}{\bf v}_m\left(\bar{\bf a}_m+\sum_{j=1}^m \bar u_{jm}\bar{\bf a}_j\right)\right).
\end{equation}
By switching the order of the summations, \eqref{eq_L1_4} can be reformulated as
\begin{align}
        {\bf v}_i &= \frac{1}{\|\tilde{\bf v}_i\|} \left(\bar{\bf a}_i 
        - \sum_{j=1}^{i-1}\left(\bar{\bf a}_i^{\rm T}{\bf v}_j+\sum_{m=j}^{i-1}\left(\bar{\bf a}_i^{\rm T}{\bf v}_m\right)\bar u_{jm}\right)
        \bar{\bf a}_j\right)\nonumber\\
        &= \frac{\bar{\bf a}_i}{\|\tilde{\bf v}_i\|} 
        - \sum_{j=1}^{i-1}\frac{\bar{\bf a}_i^{\rm T}{\bf v}_j+\sum_{m=j}^{i-1}\left(\bar{\bf a}_i^{\rm T}{\bf v}_m\right)\bar u_{jm}}{\|\tilde{\bf v}_i\|}\bar{\bf a}_j. \label{eq_L1_5}
\end{align}
Comparing \eqref{eq_L1_3} and \eqref{eq_L1_5}, we readily get
\begin{align}
	\bar u_{ii} &= \frac{1}{\|\tilde{\bf v}_i\|} - 1,\quad \forall i, \label{eq_L1_6}\\
	\bar u_{ji} &= - \frac{1}{\|\tilde{\bf v}_i\|}\left(\bar{\bf a}_i^{\rm T}{\bf v}_j+\sum_{m=j}^{i-1}\left(\bar{\bf a}_i^{\rm T}{\bf v}_m\right)\bar u_{jm}\right), \quad \forall j < i. \label{eq_L1_7}
\end{align}
We will first study \eqref{eq_L1_6} and then turn to \eqref{eq_L1_7}. 
Using \eqref{eq_L1_2} in \eqref{eq_L1_6} and noticing that both $\bar{\bf a}_i$ and ${\bf v}_m$ have been normalized, we have 
\begin{equation}\label{eq_L1_9}
	\bar u_{ii} = \frac{1}{\|\bar{\bf a}_i-\sum_{m=1}^{i-1} \left(\bar{\bf a}_i^{\rm T}{\bf v}_m\right){\bf v}_m\|}-1
	= \frac{1}{\sqrt{1 - \sum_{m=1}^{i-1} \left(\bar{\bf a}_i^{\rm T}{\bf v}_m\right)^2}} - 1. 
\end{equation}
According to the Taylor's series with Peano form of the remainder, i.e.,
$$
f(x) = \frac{1}{\sqrt{1-x}} = 1 + \frac{x}{2} + h(x)x,
$$
where $\lim_{x \to 0} h(x) = 0$, \eqref{eq_L1_9} is approximated by
\begin{equation}
	\bar u_{ii} = \left(\frac12+h(\cdot)\right) \sum_{m=1}^{i-1} \left(\bar{\bf a}_i^{\rm T}{\bf v}_m\right)^2, \label{eq_L1_10}
\end{equation}
where $h\left(\sum_{m=1}^{i-1} \left(\bar{\bf a}_i^{\rm T}{\bf v}_m\right)^2\right)$ is denoted by $h(\cdot)$ for short.
Following \eqref{eq_L1_3} and using the definition of $\bar{\bf R}$, for $m<i$ we have
\begin{equation}\label{eq_L1_11}
        \bar{\bf a}_i^{\rm T}{\bf v}_m = \bar{\bf a}_i^{\rm T}\bar{\bf a}_m + \sum_{k=1}^m \bar u_{km}\bar{\bf a}_i^{\rm T}\bar{\bf a}_k 
        = \bar r_{mi} + \sum_{k=1}^m \bar u_{km}\bar r_{ki}. 
\end{equation}
Using \eqref{eq_L1_11} in \eqref{eq_L1_10}, we have
\begin{align}
	\bar u_{ii} &= \left(\frac12+h(\cdot)\right) \sum_{m=1}^{i-1} \left(\bar r_{mi} + \sum_{k=1}^m \bar u_{km}\bar r_{ki}\right)^2\nonumber\\
	&=  \left(\frac12+h(\cdot)\right) \left(\sum_{m=1}^{i-1}\bar r_{mi}^2 + \sum_{m=1}^{i-1}\left(\left(\sum_{k=1}^m \bar u_{km}\bar r_{ki}\right)^2 + 2\sum_{k=1}^m \bar u_{km}\bar r_{mi}\bar r_{ki}\right)\right). \label{eq_L1_12}
\end{align}
Because of the symmetry of $\bar{\bf R}$, the first summation in the RHS of \eqref{eq_L1_12} is bounded by $\frac12\|\bar{\bf R}\|_F^2$. Furthermore, the second summation, which is composed of squares and products of $\bar r_{pq}$, must be bounded by $\epsilon_1\|\bar{\bf R}\|_F^2$, where $\epsilon_1$ is a small quantity. 
Consequently, we have
\begin{equation}
	\bar u_{ii} = \bar g_{ii}(\bar{\bf R})\|\bar{\bf R}\|_F^2 \le  \left(\frac12+h(\cdot)\right) \left(\frac12 + \epsilon_1\right)\|\bar{\bf R}\|_F^2, \label{eq_L1_13}
\end{equation}
where
\begin{equation}
	\lim_{\bar{\bf R}\rightarrow {\bf 0}}\bar g_{ii}(\bar{\bf R}) \le \frac14, \label{eq_L1_14}
\end{equation}
because $h(\cdot)$ tends to $0$ as $\bar{\bf R}$ approaches $\bf 0$. We then complete the first part of the lemma.

Next we will study \eqref{eq_L1_7}. Using \eqref{eq_L1_6} and \eqref{eq_L1_11} in \eqref{eq_L1_7}, we have
\begin{align}
	\bar u_{ji} &= -(1+\bar u_{ii})\left(\bar r_{ji} + \sum_{k=1}^j \bar u_{kj}\bar r_{ki}+\sum_{m=j}^{i-1}\left(\bar r_{mi} + \sum_{l=1}^m \bar u_{lm}\bar r_{li}\right)\bar u_{jm}\right)\nonumber\\
	&= -(1+\bar u_{ii})\left(\bar r_{ji} + \sum_{k=1}^j \bar u_{kj}\bar r_{ki}+\sum_{m=j}^{i-1}\bar u_{jm}\bar r_{mi}+\sum_{m=j}^{i-1}\sum_{l=1}^m \bar u_{lm}\bar u_{jm}\bar r_{li}\right), \quad\forall j<i.\label{eq_L1_15}
\end{align}
Notice that the summations in \eqref{eq_L1_15}, which are composed of $\bar r_{pq}$, must be bounded by $\epsilon_2\|\bar{\bf R}\|_F$, where $\epsilon_2$ is a small quantity. 
Plugging \eqref{eq_L1_13} and \eqref{eq_L1_14} into \eqref{eq_L1_15}, we have
\begin{equation}\label{eq_L1_16}
	\bar u_{ji} = -\left(1+\bar g_{ii}(\bar{\bf R}\right)\|\bar{\bf R}\|_F^2)\left(\bar r_{ji} + \epsilon_2\|\bar{\bf R}\|_F\right)
	= -\bar r_{ji} + \bar g_{ji}(\bar{\bf R})\|\bar{\bf R}\|_F,
\end{equation}
where
\begin{equation}
	\lim_{\bar{\bf R}\rightarrow{\bf 0}}\bar g_{ji}(\bar{\bf R}) = 0.
\end{equation}
The second part of the lemma is proved.
\end{proof}

Lemma \ref{L1} unveils that, when $\bar{\bf A}$ approaches an orthonormal basis,
i.e., $\bar{\bf R}$ approaches $\bf 0$, the diagonal elements of $\bar{\bf U}$ go to zero,
and they are of the same order as $\|\bar{\bf R}\|_F^2$.
At the same time, the off-diagonal elements go to $-\bar{\bf R}$,
and the differences are of a higher order than $\|\bar{\bf R}\|_F$.

%Lemma \ref{L1}揭示了在$\bar{\bf A}$趋于归一化正交基，即$\bar{\bf R}$趋于零的过程中，$\bar{\bf U}$的对角线元素也趋于零，且和$\|\bar{\bf R}\|_F^2$同阶减小；而$\bar{\bf U}$的非对角线元素（即严格上三角元素）会趋于$-\bar{\bf R}$，且两者的差距是$\|\bar{\bf R}\|_F$的高阶小量。

\begin{remark}
Notice that the error that we define is based on $\bar{\bf A}$ rather than $\bf V$,
i.e., we define ${\bf V}-\bar{\bf A}=\bar{\bf A}\bar{\bf U}$ rather than 
${\bf V}-\bar{\bf A}={\bf V}\bar{\bf U}$.
The reason is that $\bar{\bf A}$ is at hand and can be easily obtained,
while $\bf V$ is expensive to calculate.
It would contradict our purpose of reducing the computation complexity,
if $\bf V$ were used here.

%注意我们使用基于$\bar{\bf A}$的误差，而不是用$\bf V$的误差，即，我们定义${\bf V}-\bar{\bf A}=\bar{\bf A}\bar{\bf U}$而不是${\bf V}-\bar{\bf A}={\bf V}\bar{\bf U}$，因为$\bar{\bf A}$是一个在手边的东西，可以直接获得，而$\bf V$必须经过计算，与我们避免严格计算$\bf V$的初衷矛盾。
\end{remark}

%\begin{rem}
%Given a data set, in order to deal with the data, we are often needed to find a group of orthonormal bases of the subspace spanned by the data set. 
%But in the majority situation, the orthonormal matrix is difficult to obtain for us, i.e., Haar unitary is only used for the Gaussian matrix with independent and identity entries by the Gram-Schmidt orthogonalization procedure on the column vectors. 
%Then Lemma \ref{L1} provides a simple and operable method to approximate the orthonormal matrix through only normalization process without orthogonalization. 
%According to the Lemma, when ${\bf a}_i^{\rm T}{\bf v}_m\le\varepsilon$ for $m < i$, we can make a good estimate. [这里是想说(13)吗？]
%Approximately, the condition is equivalent to $\left|{\bf a}_j^{\rm T}{\bf a}_i\right|\le\varepsilon$ for $j \ne i$, namely, we need the original data is pairwise almost orthogonal. [感觉很显然，而且和Lemma关系不大，有必要说吗？] 
%This is easily satisfied, because, nowadays, the dimension of data becomes larger and larger, and according to the concentration of measure, Lemma \ref{L2}, the condition is met with high probability, i.e., the probability approaches 1 when the dimension is sufficiently large. [这里用到了后文的内容，是否向后放？]
%\end{rem}

For the energy of the projection of a vector onto a subspace,
based on Lemma \ref{L1}, we can obtain the approximation error of using
$\bar{\bf A}$ instead of $\bf V$.
The following corollary gives an upper bound on such error.

%针对计算一个向量投影到一个子空间后的能量大小问题，借助Lemma \ref{L1}，
%我们可以很方便的得到用$\bar{\bf A}$作为归一化正交基$\bf V$的估计会带来多大的误差。
%Corollary \ref{C1}给出了这个误差的界。

\begin{corollary}\label{C1}
Following the definition of Lemma \ref{L1}, we use a column-normalized matrix $\bar{\bf A}=\left[\bar{\bf a}_1, \bar{\bf a}_2, \dots, \bar{\bf a}_d\right]$ to approximate its orthonormal matrix ${\bf V}=\left[{\bf v}_1, {\bf v}_2, \dots, {\bf v}_d\right]$ gotten through Gram-Schmidt process. 
The approximation error ${\bf V}-\bar{\bf A}=\bar{\bf A}\bar{\bf U}$. 
For an arbitrary vector ${\bf x} \in\mathbb{R}^n$, we conclude that
\begin{equation}\label{eq_C1_1}
\left|\|{\bf V}^{\rm T}{\bf x}\|^2-\|\bar{\bf A}^{\rm T}{\bf x}\|^2\right| \le d\|\bar{\bf A}^{\rm T}{\bf x}\|^2\max{\bar{\bf R}} + \epsilon(\bar{\bf R})\|\bar{\bf R}\|_F,
\end{equation}
where $\lim_{\bar{{\bf R}} \to {\bf 0}} \epsilon(\bar{\bf R}) = 0$.
\end{corollary}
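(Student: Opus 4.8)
The plan is to turn the difference $\|{\bf V}^{\rm T}{\bf x}\|^2-\|\bar{\bf A}^{\rm T}{\bf x}\|^2$ into a quadratic form in the \emph{available} vector ${\bf y}:=\bar{\bf A}^{\rm T}{\bf x}$ and then feed in the entrywise estimates of Lemma~\ref{L1}. Since Lemma~\ref{L1} gives ${\bf V}-\bar{\bf A}=\bar{\bf A}\bar{\bf U}$, i.e. ${\bf V}=\bar{\bf A}({\bf I}+\bar{\bf U})$, we have ${\bf V}^{\rm T}{\bf x}=({\bf I}+\bar{\bf U})^{\rm T}{\bf y}$, so
\begin{equation*}
\|{\bf V}^{\rm T}{\bf x}\|^2-\|\bar{\bf A}^{\rm T}{\bf x}\|^2 = {\bf y}^{\rm T}\bigl( ({\bf I}+\bar{\bf U})({\bf I}+\bar{\bf U})^{\rm T}-{\bf I} \bigr){\bf y} = 2{\bf y}^{\rm T}\bar{\bf U}{\bf y}+\|\bar{\bf U}^{\rm T}{\bf y}\|^2 ,
\end{equation*}
where ${\bf y}^{\rm T}\bar{\bf U}^{\rm T}{\bf y}={\bf y}^{\rm T}\bar{\bf U}{\bf y}$ because it is a scalar. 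Everything then reduces to bounding ${\bf y}^{\rm T}\bar{\bf U}{\bf y}$ and $\|\bar{\bf U}^{\rm T}{\bf y}\|^2$.

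For the linear-in-$\bar{\bf U}$ term I would write ${\bf y}^{\rm T}\bar{\bf U}{\bf y}=\sum_i\bar u_{ii}y_i^2+\sum_{j<i}\bar u_{ji}y_jy_i$ and substitute $\bar u_{ji}=-\bar r_{ji}+\bar g_{ji}(\bar{\bf R})\|\bar{\bf R}\|_F$ from \eqref{eq_L1_0_2}. The leading piece is $-\sum_{j<i}\bar r_{ji}y_jy_i$; since $\bar r_{ii}=0$ (the columns of $\bar{\bf A}$ are normalized) and $\bar{\bf R}$ is symmetric, this equals $-\tfrac12{\bf y}^{\rm T}\bar{\bf R}{\bf y}$, and by the triangle inequality together with Cauchy--Schwarz
\begin{equation*}
|{\bf y}^{\rm T}\bar{\bf R}{\bf y}| \le \max{\bar{\bf R}}\,\Bigl(\sum_i|y_i|\Bigr)^2 \le d\,\max{\bar{\bf R}}\,\|{\bf y}\|^2 = d\,\|\bar{\bf A}^{\rm T}{\bf x}\|^2\max{\bar{\bf R}} ,
\end{equation*}
with $\max{\bar{\bf R}}$ the largest entry of $\bar{\bf R}$ in magnitude. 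Multiplying by the factor $2$ in $2{\bf y}^{\rm T}\bar{\bf U}{\bf y}$ gives exactly the first term of \eqref{eq_C1_1}.

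It remains to check that everything left over is $o(\|\bar{\bf R}\|_F)$. Those leftovers are: the diagonal contribution $2\sum_i\bar u_{ii}y_i^2$, which by \eqref{eq_L1_0_1} is $O(\|\bar{\bf R}\|_F^2)\,\|{\bf y}\|^2$; the correction $2\sum_{j<i}\bar g_{ji}(\bar{\bf R})\|\bar{\bf R}\|_F y_jy_i$, which (again by Cauchy--Schwarz) is bounded by $d\,\max_{j<i}|\bar g_{ji}(\bar{\bf R})|\,\|\bar{\bf R}\|_F\,\|{\bf y}\|^2$, hence $o(\|\bar{\bf R}\|_F)\,\|{\bf y}\|^2$ since $\bar g_{ji}\to0$; and $\|\bar{\bf U}^{\rm T}{\bf y}\|^2\le\|\bar{\bf U}\|_F^2\,\|{\bf y}\|^2$, which is $O(\|\bar{\bf R}\|_F^2)\,\|{\bf y}\|^2$ because Lemma~\ref{L1} forces $\|\bar{\bf U}\|_F^2=\tfrac12\|\bar{\bf R}\|_F^2+o(\|\bar{\bf R}\|_F^2)$. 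All three share the factor $\|{\bf y}\|^2=\|\bar{\bf A}^{\rm T}{\bf x}\|^2\le(1+\|\bar{\bf R}\|_F)\|{\bf x}\|^2$, so for a fixed ${\bf x}$ they collapse into $\epsilon(\bar{\bf R})\|\bar{\bf R}\|_F$ with $\epsilon(\bar{\bf R})\to0$; applying the triangle inequality to $2{\bf y}^{\rm T}\bar{\bf U}{\bf y}+\|\bar{\bf U}^{\rm T}{\bf y}\|^2$ then yields \eqref{eq_C1_1}.

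The main obstacle is this last bookkeeping step rather than any single computation: one must verify that every remainder term genuinely carries an extra factor vanishing faster than $\|\bar{\bf R}\|_F$ (using $h(\cdot)\to0$, $\bar g_{ji}\to0$, and $\bar g_{ii}$ bounded from Lemma~\ref{L1}), and one must be explicit that $\epsilon(\bar{\bf R})$ in fact depends on ${\bf x}$ through the harmless factor $\|{\bf x}\|^2$, since $\|\bar{\bf A}^{\rm T}{\bf x}\|^2$ multiplies each leftover. The rest is routine: expanding a quadratic form and quoting Lemma~\ref{L1}.
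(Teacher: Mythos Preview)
Your proposal is correct and follows essentially the same route as the paper: both write ${\bf V}^{\rm T}{\bf x}=({\bf I}+\bar{\bf U})^{\rm T}{\bf y}$ with ${\bf y}=\bar{\bf A}^{\rm T}{\bf x}$, split the difference into a linear-in-$\bar{\bf U}$ piece (your $2{\bf y}^{\rm T}\bar{\bf U}{\bf y}$, the paper's $\sum_i 2c_ib_i$) that yields the $d\|\bar{\bf A}^{\rm T}{\bf x}\|^2\max\bar{\bf R}$ term via Lemma~\ref{L1}, and a quadratic-in-$\bar{\bf U}$ piece (your $\|\bar{\bf U}^{\rm T}{\bf y}\|^2$, the paper's $\sum_i c_i^2$) absorbed into the $o(\|\bar{\bf R}\|_F)$ remainder. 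The only difference is packaging: you keep the quadratic-form notation, while the paper expands entrywise from the outset and in fact extracts the slightly sharper constant $(d-1)$ before relaxing to $d$.
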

\begin{proof}
Let ${\bf b}_0={\bf V}^{\rm T}{\bf x}$ and ${\bf b}=\bar{\bf A}^{\rm T}{\bf x}$. We have
\begin{equation}\label{eq_C1_2}
        \textrm{LHS\ of\ \eqref{eq_C1_1}}
        = \left|\|{\bf b}_0\|^2-\|{\bf b}\|^2\right|
        = \left|\sum_{i=1}^d (b_{0,i}^2 - b_i^2)\right|
        \le \sum_{i=1}^d \left|b_{0,i}^2 - b_i^2\right|.
\end{equation}
By further defining ${\bf c}={\bf b}_0-{\bf b}=\bar{\bf U}^{\rm T}{\bf b}$, where $c_i=\sum_{m=1}^i \bar u_{mi}b_m$, we have
\begin{align}
        \textrm{RHS\ of\ \eqref{eq_C1_2}} =& \sum_{i=1}^d \left|c_i(2b_i+c_i)\right| \nonumber\\
        \le& \sum_{i=1}^d \sum_{m=1}^i |\bar u_{mi}|(b_i^2+b_m^2) + \sum_{i=1}^d c_i^2 \nonumber\\
        \le& \sum_{i=1}^d \sum_{m=1}^{i-1} |\bar u_{mi}|(b_i^2+b_m^2) + 2\sum_{i=1}^d |\bar u_{ii}|b_i^2 + \sum_{i=1}^d i\sum_{m=1}^i \bar u_{mi}^2b_m^2.\label{eq_C1_3}
\end{align}
Let's first check the third item in the RHS of \eqref{eq_C1_3}. 
By using \eqref{eq_L1_0_1} and \eqref{eq_L1_0_2}, we have
\begin{align}
	\sum_{i=1}^d i\sum_{m=1}^i \bar u_{mi}^2b_m^2 &= \sum_{i=1}^di\sum_{m=1}^{i-1}\bar u_{mi}^2b_m^2 + \sum_{i=1}^di\bar u_{ii}^2b_i^2\nonumber\\
	&= \sum_{i=1}^di\sum_{m=1}^{i-1}\left(-\bar r_{mi}+\bar g_{mi}(\bar{\bf R})\|\bar{\bf R}\|_F\right)^2b_m^2 + \sum_{i=1}^di\bar g_{ii}^2(\bar{\bf R})\|\bar{\bf R}\|_F^4b_i^2\label{eq_C1_4}\\
	&= \epsilon'(\bar{\bf R})\|\bar{\bf R}\|_F,\label{eq_C1_5}
\end{align}
where $\lim_{\bar{\bf R} \to {\bf 0}} \epsilon'(\bar{\bf R}) = 0$. 
Equation \eqref{eq_C1_5} is derived because all items in \eqref{eq_C1_4} are of higher order of $\|\bar{\bf R}\|_F$. 
Following the similar way, we adopt \eqref{eq_L1_0_1}, \eqref{eq_L1_0_2}, and \eqref{eq_C1_5} in \eqref{eq_C1_3},
\begin{align}
        \textrm{RHS\ of\ \eqref{eq_C1_3}} \le& \sum_{i=1}^d \sum_{m=1}^{i-1} (|\bar r_{mi}| + |\bar g_{mi}(\bar{\bf R})|\|\bar{\bf R}\|_F) (b_i^2+b_m^2) + 2\sum_{i=1}^d |\bar g_{ii}(\bar{\bf R})|\|\bar{\bf R}\|_F^2b_i^2 + \epsilon'(\bar{\bf R})\|\bar{\bf R}\|_F \nonumber\\
        \le& \left(\sum_{i=1}^d \sum_{m=1}^{i-1} b_i^2+b_m^2\right)\max(\bar{\bf R}) \nonumber\\
        &  + \sum_{i=1}^d \sum_{m=1}^{i-1} |\bar g_{mi}(\bar{\bf R})|\|\bar{\bf R}\|_F(b_i^2+b_m^2) + 2\sum_{i=1}^d |\bar g_{ii}(\bar{\bf R})|\|\bar{\bf R}\|_F^2b_i^2 + \epsilon'(\bar{\bf R})\|\bar{\bf R}\|_F \label{eq_C1_6}\\
        \le& (d-1)\|{\bf b}\|^2\max{\bar{\bf R}} + \epsilon(\bar{\bf R})\|\bar{\bf R}\|_F \label{eq_C1_7} \le \textrm{RHS\ of\ \eqref{eq_C1_1}},
\end{align}
where $\lim_{{\bf R} \to {\bf 0}} \epsilon(\bar{\bf R}) = 0$. 
Equation \eqref{eq_C1_7} is derived because the last three items in \eqref{eq_C1_6} are of higher order of $\|\bar{\bf R}\|_F$. 
\end{proof}

\begin{remark}
${\bf V}^{\rm T}{\bf x}$ denotes the projection of a vector ${\bf x}$ in the subspace of ${\bf V}$. 
Then Corollary \ref{C1} shows that the relative error for using $\bar{\bf A}^{\rm T}{\bf x}$ to estimate the projected energy is $d\max{\bar{\bf R}}$.
\end{remark}

\begin{example}
Given a random matrix ${\bm \Phi} \in \mathbb{R}^{n \times k}$, whose entries are independent standard normal random variables, we can estimate the truncated Haar matrix \cite{tulino2004random, petz2004asymptotics} by $\bar{\bm \Phi}$ through normalizing the columns of ${\bm \Phi}$. 
According to Lemma \ref{L2}, we can easily find that, with probability at least $1 - ({k(k-1)}/{2})\exp\left({-{n\varepsilon^2}/{2}}\right)$, the inner product of any two columns of $\bar{\bm \Phi}$ is less than $\varepsilon$. 
Then, according to Lemma \ref{L1},  the Frobenius norm of the estimating error is less than $\sqrt{{k(k-1)}/{2}}\varepsilon + o(\varepsilon)$. 
On the other hand, according to Corollary \ref{C1}, we can consider $\bar{\bm \Phi}\bar{\bm \Phi}^{\rm T}$ as the projection matrix of ${\bm \Phi}$.
\end{example}

\begin{lemma}\label{L2} 
Since the normalized Gaussian random vector is uniformly distributed on the sphere, according to the concentration of measure on the sphere \cite{levy1951problemes, schmidt1948brunn}, we have $\mathbb{P}\left\{|\cos\theta|>\varepsilon\right\}\le \exp\left({-{n\varepsilon^2}/{2}}\right)$, where $\theta$ denotes the angle between two independent Gaussian random vectors, whose elements are independent standard normal random variables.
\end{lemma}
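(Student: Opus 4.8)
\section*{Proof proposal for Lemma~\ref{L2}}

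The plan is to reduce the statement to the classical estimate for the normalized surface measure of a spherical cap and then invoke the concentration of measure on the sphere. Let ${\bf g},{\bf h}\in\mathbb{R}^n$ be the two independent vectors with i.i.d.\ standard normal entries, so that $\cos\theta = \dfrac{{\bf g}^{\rm T}{\bf h}}{\|{\bf g}\|\,\|{\bf h}\|}$. First I would exploit the rotational invariance of the standard Gaussian law: conditioning on ${\bf g}$, the conditional law of ${\bf h}$ is unchanged under any orthogonal transformation and, in particular, under a rotation $O$ (depending on ${\bf g}$) with $O({\bf g}/\|{\bf g}\|)={\bf e}_1$. Since $O$ preserves inner products and norms, $\cos\theta = (O{\bf h})_1/\|O{\bf h}\|$, and $O{\bf h}$ has again i.i.d.\ standard normal entries. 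Hence $\cos\theta$ has the same law as $u_1$, where ${\bf u}:={\bf h}/\|{\bf h}\|$ is uniformly distributed on the unit sphere $S^{n-1}$, and the problem becomes to bound $\mathbb{P}\{|u_1|>\varepsilon\}$ for ${\bf u}$ uniform on $S^{n-1}$.

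Next, by the sign symmetry ${\bf u}\stackrel{d}{=}-{\bf u}$ we have $\mathbb{P}\{|u_1|>\varepsilon\}=2\,\mathbb{P}\{u_1>\varepsilon\}$, and $\mathbb{P}\{u_1>\varepsilon\}$ is exactly the normalized surface measure $\sigma(C_\varepsilon)$ of the cap $C_\varepsilon=\{{\bf x}\in S^{n-1}:x_1>\varepsilon\}$. The core of the argument is then the classical cap bound
$$
\sigma(C_\varepsilon)\le \tfrac12\,e^{-n\varepsilon^2/2},
$$
which follows from the L\'evy--Schmidt isoperimetric inequality on the sphere \cite{levy1951problemes, schmidt1948brunn} applied to the $1$-Lipschitz function ${\bf x}\mapsto x_1$, whose median on $S^{n-1}$ is $0$. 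Combining this with the factor $2$ picked up in the symmetrization step gives $\mathbb{P}\{|\cos\theta|>\varepsilon\}\le e^{-n\varepsilon^2/2}$, which is the claim.

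If one wants a self-contained derivation of the cap bound rather than quoting the isoperimetric inequality, the alternative is a direct Laplace-type estimate: write
$$
\sigma(C_\varepsilon)=\frac{\int_\varepsilon^1(1-s^2)^{(n-3)/2}\,ds}{\int_{-1}^1(1-s^2)^{(n-3)/2}\,ds},
$$
bound the numerator via $(1-s^2)^{(n-3)/2}\le e^{-(n-3)s^2/2}$ after the shift $s\mapsto s+\varepsilon$, and bound the denominator from below by restricting the integration to $|s|\le c/\sqrt{n}$. The delicate point --- and the step I expect to be the main obstacle --- is the constant-chasing: one must ensure that the exponent comes out as $n\varepsilon^2/2$ rather than $(n-2)\varepsilon^2/2$ or $(n-3)\varepsilon^2/2$, and that the prefactor is at most $\tfrac12$ so that it is absorbed by the symmetrization factor. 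This typically needs either the sharp form of the spherical isoperimetric inequality, or a more careful comparison of the two integrals (for instance via monotonicity of the ratio in $n$ together with its large-$n$ limit, where $\sqrt{n}\,u_1\Rightarrow\mathcal{N}(0,1)$ reduces the bound to the standard Gaussian tail $Q(\sqrt{n}\,\varepsilon)\le\tfrac12 e^{-n\varepsilon^2/2}$). The rotational-invariance reduction and the symmetrization are otherwise entirely routine.
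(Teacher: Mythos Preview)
Your proposal is correct and follows exactly the route the paper indicates: the paper gives no separate proof of Lemma~\ref{L2} but simply records it as an immediate consequence of (i) the fact that a normalized Gaussian vector is uniform on $S^{n-1}$ and (ii) the L\'evy--Schmidt concentration-of-measure bound for spherical caps, which is precisely your reduction via rotational invariance followed by the cap estimate. Your remark about the constant-chasing (getting $n$ rather than $n-1$ or $n-2$ in the exponent, and absorbing the symmetrization factor~$2$) is well taken; the paper simply quotes the inequality in the stated form without addressing this.
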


\section{Approximation of orthonormal basis by arbitrary basis}
\label{sec3}

Based on the previous section, we discuss the error of using the original matrix $\bf A$
as an approximation of its orthonormalization $\bf V$.
To begin with, we define $({\bf R}, {\bf W})$ to measure the similarity between 
${\bf A}$ and $\bf V$.
Then the matrix ${\bf U}$ is used to describe the similarity between 
${\bf A}$ and $\bf V$, where ${\bf V}-{\bf A}={\bf A}{\bf U}$.
The following corollary describes the performance of ${\bf U}$
as ${\bf R}\rightarrow 0$ and ${\bf W}\rightarrow{\bf I}$.

%在上一节的基础上，我们讨论直接用未归一化的$\bf A$近似其归一化正交阵$\bf V$的误差。
%我们首先定义一对新的变量$({\bf R}, {\bf W})$度量${\bf A}$和一个正交基底的相似程度。
%在定义${\bf V}-{\bf A}={\bf A}{\bf U}$的基础上，用${\bf U}$度量${\bf A}$和$\bf V$的相似程度。
%Corollary \ref{C2} 给出了在${\bf R}$趋于零和$\bf W$趋于$\bf I$的条件下${\bf U}$的变化情况。

\begin{corollary}\label{C2}
Let ${\bf V}=\left[{\bf v}_1, {\bf v}_2, \dots, {\bf v}_d\right]$ denote the orthonormal matrix of an arbitrary matrix ${\bf A}=\left[{\bf a}_1, {\bf a}_2, \dots, {\bf a}_d\right]$ gotten through the Gram-Schmidt process. 
Let ${\bf W}$ be a diagonal matrix with $w_{ii} = \|{\bf a}_i\|^2$, and ${\bf R} = (r_{ij}) = {\bf A}^{\rm T}{\bf A} - {\bf W}$. 
When $\bf W$ approaches to $\bf I$, and $r_{ji} = {\bf a}_j^{\rm T}{\bf a}_i$ is small enough for $j \ne i$, we can use ${\bf A}$ to approximate ${\bf V}$ with error ${\bf V}-{\bf A}={\bf A}{\bf U}$, where ${\bf U}=(u_{ji})\in\mathbb{R}^{d\times d}$ is an upper triangular matrix satisfying
\begin{equation}\label{eq_C2_1}
u_{ii} = \frac{1-w_{ii}}{2} + h({\bf R},{\bf W})(1-w_{ii}) + g_{ii}({\bf R}, {\bf W})\|{\bf R}\|_F^2,\quad \forall i,
\end{equation}
where $\lim_{{\bf R} \to {\bf 0}, {\bf W} \to {\bf I}} h({\bf R}, {\bf W}) = 0$ and $\lim_{{\bf R} \to {\bf 0},{\bf W} \to {\bf I}} g_{ii}({\bf R}, {\bf W})\le 1/4$, and
\begin{equation}\label{eq_C2_2}
u_{ji} = - r_{ji} + g_{ji}({\bf R}, {\bf W})\|{\bf R}\|_F, \quad \forall j< i,
\end{equation}
where $\lim_{{\bf R} \to {\bf 0}, {\bf W} \to {\bf I}} g_{ji}({\bf R}, {\bf W}) = 0$.
\end{corollary}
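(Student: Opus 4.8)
The plan is to reduce Corollary \ref{C2} to Lemma \ref{L1} by factoring out the column norms. Let ${\bf D}$ be the diagonal matrix with $d_{ii}=\|{\bf a}_i\|=\sqrt{w_{ii}}$, which is invertible because the columns of ${\bf A}$ are linearly independent; then ${\bf A}=\bar{\bf A}{\bf D}$, where $\bar{\bf A}$ is the column-normalized matrix of \eqref{eq_L1_0}, and ${\bf W}={\bf D}^2$. Since the Gram-Schmidt process of \eqref{eq_L1_1}--\eqref{eq_L1_2} is invariant under positive rescaling of the columns (the remark after \eqref{eq_L1_2} records the special case ${\bf a}_i$ versus $\bar{\bf a}_i$), the matrix ${\bf V}$ built from ${\bf A}$ equals the one built from $\bar{\bf A}$. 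Writing $\bar{\bf U}$ for the upper triangular matrix of Lemma \ref{L1}, we get $\bar{\bf A}({\bf I}+\bar{\bf U})={\bf V}={\bf A}({\bf I}+{\bf U})=\bar{\bf A}{\bf D}({\bf I}+{\bf U})$, and cancelling $\bar{\bf A}$ (it has full column rank) gives ${\bf I}+{\bf U}={\bf D}^{-1}({\bf I}+\bar{\bf U})$, i.e. ${\bf U}={\bf D}^{-1}-{\bf I}+{\bf D}^{-1}\bar{\bf U}$. In particular ${\bf U}$ is upper triangular, $u_{ii}=d_{ii}^{-1}-1+d_{ii}^{-1}\bar u_{ii}$, and $u_{ji}=d_{jj}^{-1}\bar u_{ji}$ for $j<i$.

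Next I would check that the hypothesis of Corollary \ref{C2} implies that of Lemma \ref{L1}. From $\bar{\bf A}={\bf A}{\bf D}^{-1}$ and ${\bf A}^{\rm T}{\bf A}={\bf R}+{\bf W}$ one computes $\bar{\bf R}=\bar{\bf A}^{\rm T}\bar{\bf A}-{\bf I}={\bf D}^{-1}{\bf R}{\bf D}^{-1}$, so $\bar r_{ji}=r_{ji}/\sqrt{w_{jj}w_{ii}}$. As ${\bf W}\to{\bf I}$ the entries of ${\bf D}^{\pm1}$ tend to $1$, hence $\bar r_{ji}$ is small for $j\ne i$ whenever $r_{ji}$ is, Lemma \ref{L1} applies, and $\|\bar{\bf R}\|_F$ and $\|{\bf R}\|_F$ agree up to a multiplicative factor tending to $1$.

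Then I would substitute \eqref{eq_L1_0_1}--\eqref{eq_L1_0_2} into the identities for $u_{ii}$ and $u_{ji}$ and Taylor-expand. For the diagonal, $d_{ii}^{-1}=w_{ii}^{-1/2}=1+\tfrac{1-w_{ii}}{2}+h_1(w_{ii})(1-w_{ii})$ with $h_1(w_{ii})\to0$ by Taylor's formula with Peano remainder (exactly as in the proof of Lemma \ref{L1}), while $d_{ii}^{-1}\bar u_{ii}=d_{ii}^{-1}\bar g_{ii}(\bar{\bf R})\|\bar{\bf R}\|_F^2$ has the form $g_{ii}({\bf R},{\bf W})\|{\bf R}\|_F^2$ with $\lim g_{ii}=\lim\bar g_{ii}\le 1/4$ (using $d_{ii}^{-1}\to1$ and $\|\bar{\bf R}\|_F^2/\|{\bf R}\|_F^2\to1$); adding $-1$ yields \eqref{eq_C2_1}. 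For $j<i$, $u_{ji}=-d_{jj}^{-1}\bar r_{ji}+d_{jj}^{-1}\bar g_{ji}(\bar{\bf R})\|\bar{\bf R}\|_F$, and since $d_{jj}^{-1}\bar r_{ji}=r_{ji}/(w_{jj}\sqrt{w_{ii}})=r_{ji}+\eta({\bf W})r_{ji}$ with $\eta({\bf W})\to0$, both remainder pieces $\eta({\bf W})r_{ji}$ and $d_{jj}^{-1}\bar g_{ji}(\bar{\bf R})\|\bar{\bf R}\|_F$ are $o(\|{\bf R}\|_F)$, giving \eqref{eq_C2_2}.

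The step requiring the most care is the bookkeeping of the mixed remainder terms. The Peano remainders produced by the column scaling (functions of ${\bf W}$ alone) and those inherited from Lemma \ref{L1} (functions of $\bar{\bf R}$, hence of $({\bf R},{\bf W})$) must be sorted so that in \eqref{eq_C2_1} the coefficient $h$ multiplies only $(1-w_{ii})$, the coefficient $g_{ii}$ multiplies only $\|{\bf R}\|_F^2$, and in \eqref{eq_C2_2} everything beyond $-r_{ji}$ is absorbed into a single $g_{ji}({\bf R},{\bf W})\to0$; in particular one must confirm that cross-terms such as $(1-w_{jj})r_{ji}$ are genuinely of higher order than $\|{\bf R}\|_F$ and can be hidden in $g_{ji}$. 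Everything else reduces to the Taylor estimates already carried out in Lemma \ref{L1}.
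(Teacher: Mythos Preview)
Your proposal is correct and takes a genuinely different route from the paper. The paper does \emph{not} reduce Corollary \ref{C2} to Lemma \ref{L1}; instead it reruns the Lemma \ref{L1} computation from scratch with the unnormalized vectors, obtaining the analogues of \eqref{eq_L1_6}--\eqref{eq_L1_7} and then Taylor-expanding $\big(w_{ii}-\sum_{m<i}({\bf a}_i^{\rm T}{\bf v}_m)^2\big)^{-1/2}$ directly, which produces the extra $(1-w_{ii})/2$ contribution in one shot. Your approach, by contrast, uses the factorization ${\bf A}=\bar{\bf A}{\bf D}$ and the scale invariance of Gram--Schmidt to write ${\bf U}={\bf D}^{-1}-{\bf I}+{\bf D}^{-1}\bar{\bf U}$, then invokes Lemma \ref{L1} as a black box for $\bar{\bf U}$ and Taylor-expands only the scalar $w_{ii}^{-1/2}$. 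This is cleaner conceptually: it isolates the two sources of error (column norms $\ne 1$ and off-diagonal inner products $\ne 0$) into the diagonal correction ${\bf D}^{-1}-{\bf I}$ and the inherited $\bar{\bf U}$, and it avoids redoing the inductive estimates on ${\bf a}_i^{\rm T}{\bf v}_m$. The paper's approach has the minor advantage that the function $h$ in \eqref{eq_C2_1} genuinely depends on both ${\bf R}$ and ${\bf W}$ (it is the Peano remainder evaluated at $1-w_{ii}+\sum_{m<i}({\bf a}_i^{\rm T}{\bf v}_m)^2$), whereas in your version $h$ depends only on $w_{ii}$; but since the statement only asserts $h\to 0$, your stronger conclusion is perfectly compatible. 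Your closing remark about the cross-terms $(1-w_{jj})r_{ji}$ is on point: since $|r_{ji}|\le\|{\bf R}\|_F$ and the prefactor tends to $0$ as ${\bf W}\to{\bf I}$, they are indeed $o(\|{\bf R}\|_F)$ and fold into $g_{ji}$.
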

\begin{proof}
The proof follows a similar routine as that of Lemma \ref{L1}, where the variables with bar in the proof of Lemma \ref{L1} are exactly the counterparts of the variables here. 
Therefore we will only highlight those different. 
Referring to the deduction of \eqref{eq_L1_6} and \eqref{eq_L1_7} in the proof of Lemma \ref{L1}, we have
\begin{align}
	u_{ii} &= \frac{1}{\|\tilde{\bf v}_i\|} - 1, \quad  \forall i, \label{eq_C2_3}\\
	u_{ji} &= - \frac{1}{\|\tilde{\bf v}_i\|}\left({\bf a}_i^{\rm T}{\bf v}_j+\sum_{m=j}^{i-1}\left({\bf a}_i^{\rm T}{\bf v}_m\right) u_{jm}\right), \quad \forall j < i. \label{eq_C2_4}
\end{align}
where
\begin{align}
\tilde{\bf v}_i &={\bf a}_i-\sum_{m=1}^{i-1} \left({\bf a}_i^{\rm T}{\bf v}_m\right){\bf v}_m, \quad \forall i, \label{eq_C2_5}\\
{\bf a}_i^{\rm T}{\bf v}_m &= {\bf a}_i^{\rm T}{\bf a}_m + \sum_{k=1}^m  u_{km}{\bf a}_i^{\rm T}{\bf a}_k 
        =  r_{mi} + \sum_{k=1}^m  u_{km} r_{ki},\quad \forall m < i. \label{eq_C2_51}
\end{align}
We will first check \eqref{eq_C2_3} and then \eqref{eq_C2_4}. 
Noticing that ${\bf a}_i$ is not normalized, we have
 \begin{equation}\label{eq_C2_6}
	u_{ii} = \frac{1}{\|{\bf a}_i-\sum_{m=1}^{i-1} \left({\bf a}_i^{\rm T}{\bf v}_m\right){\bf v}_m\|}-1
	= \frac{1}{\sqrt{w_{ii} - \sum_{m=1}^{i-1} \left({\bf a}_i^{\rm T}{\bf v}_m\right)^2}}-1.
\end{equation}
Using the Taylor's series with Peano form of the remainder in \eqref{eq_C2_6}, we have
\begin{equation}\label{eq_C2_7}
	u_{ii} = \left(\frac12 + h({\bf R},{\bf W})\right)\left(1-w_{ii} + \sum_{m=1}^{i-1} \left({\bf a}_i^{\rm T}{\bf v}_m\right)^2\right),
\end{equation}
where, without confusing, $h\left(1-w_{ii} + \sum_{m=1}^{i-1} \left({\bf a}_i^{\rm T}{\bf v}_m\right)^2\right)$ is denoted as a function of ${\bf R}$ and ${\bf W}$ for better understanding. 
Using \eqref{eq_C2_51} in \eqref{eq_C2_7} and referring to deduction of \eqref{eq_L1_12}, we have
\begin{equation}\label{eq_C2_8}
	u_{ii} = \frac{1-w_{ii}}{2} + h({\bf R},{\bf W})(1-w_{ii}) + g_{ii}({\bf R}, {\bf W})\|{\bf R}\|_F^2,
\end{equation}
where $\lim_{{\bf R} \to {\bf 0}, {\bf W} \to {\bf I}} h({\bf R}, {\bf W}) = 0$ and $\lim_{{\bf R} \to {\bf 0}, {\bf W} \to {\bf I}} g_{ii}({\bf R}, {\bf W}) \le 1/4$. 

Now we will study \eqref{eq_C2_4}. 
Plugging \eqref{eq_C2_3}, \eqref{eq_C2_51}, and \eqref{eq_C2_8} in \eqref{eq_C2_4} and referring to the deduction of \eqref{eq_L1_15} and \eqref{eq_L1_16}, we have
\begin{align}
	u_{ji} &= -(1+u_{ii})\left({\bf a}_i^{\rm T}{\bf v}_j+\sum_{m=j}^{i-1}\left({\bf a}_i^{\rm T}{\bf v}_m\right) u_{jm}\right) \nonumber\\
	&= -\left(1 + \frac{1-w_{ii}}{2} + h({\bf R}, {\bf W})(1-w_{ii}) + g_{ii}({\bf R},{\bf W})\|{\bf R}\|_F^2\right) \nonumber\\
	&\quad\cdot\left( r_{ji} + \sum_{k=1}^j  u_{kj} r_{ki}+\sum_{m=j}^{i-1} u_{jm} r_{mi}+\sum_{m=j}^{i-1}\sum_{l=1}^m  u_{lm} u_{jm} r_{li}\right) \nonumber \\
	&= -r_{ji} + g_{ji}({\bf R}, {\bf W})\|{\bf R}\|_F, \quad \forall j<i,
\end{align}
where $\lim_{{\bf R} \to {\bf 0}, {\bf W} \to {\bf I}} g_{ji}({\bf R}, {\bf W}) = 0$. We then complete the proof.
\end{proof}

\begin{example}
Given a random matrix ${\bm \Phi} \in \mathbb{R}^{n \times k}$, whose entries are independent standard normal random variables, we can also use $({1}/{\sqrt{n}}){\bm \Phi}$ to approximate the truncated Haar matrix \cite{tulino2004random, petz2004asymptotics}. According to Corollary \ref{C2} and Law of Large Number, with high probability, the error is small enough when $n$ is large enough. 
Notice that the random matrix here is different from the measurement matrix in Compressed Sensing (CS) \cite{candes2008restricted, donoho2006compressed, johnson1984extensions}, since here we need $n \gg k$. 
\end{example}

\begin{remark}
If a Gaussian random matrix is orthonormalized, then its columns (and even entries) are not independent anymore.
Therefore, the orthonormal matrix no longer satisfies useful properties of Gaussian matrices.
According to the proposed theoretical analysis, a Gaussian matrix can be an approximation of its
orthonormalization. Certain error is inevitable, but it can be small enough, and the independency between
columns (and even entries) is preserved.

%如果对高斯矩阵做正交化，会导致各列（甚至各点）都不独立，失去了高斯随机矩阵良好的性质。应用本理论，可以用高斯矩阵近似。在丢失一定精度的情况下，保留各列（甚至各点）的独立性。
\end{remark}

\begin{example}
As an application, the conclusions of this work can be used to prove the restricted isometric property of
random projection of a finite number of subspaces \cite{2017ICASSP}, where the detailed proofs can be found in \cite{FullVersion}.
%应用本文结论，可以证明有限个子空间的压缩后保距问题。受篇幅所限不做展开，具体参考\cite{ICASSP2017}.
\end{example}

\bibliographystyle{IEEEtran}
\bibliography{IEEEabrv,mybibfile}

\vfill\pagebreak

\end{document}